\newtheorem{theorem}{Theorem}
\newtheorem{remark}{Remark}[section]
\newtheorem{assumption}{Assumption}[section]
\newtheorem{lemma}[theorem]{Lemma}
\def\calA{{\mathcal{A}}}
\begin{document}

\title{Iterative Soft/Hard Thresholding with Homotopy Continuation for Sparse Recovery}
\author{ Yuling~Jiao, Bangti Jin
         and~Xiliang~Lu% <-this % stops a space
%\IEEEcompsocitemizethanks
\thanks{Yuling Jiao is in the School of Statistics and Mathematics and Big Data Institute of ZUEL, Zhongnan University of Economics and Law, Wuhan, 430063, P.R. China (email: yulingjiaomath@whu.edu.cn), Bangti Jin is in the Department of Computer Science, University College London, Gower Street, London WC1E 6BT, UK (email: bangti.jin@gmail.com, b.jin@ucl.ac.uk), and Xiliang Lu (corresponding author) is in the School of Mathematics and Statistics, Wuhan University and
Hubei Key Laboratory of Computational Science, Wuhan University, Wuhan 430072, P.R. China (email: xllv.math@whu.edu.cn).}}

\markboth{IEEE SIGNAL PROCESSING Letters,Vol.~~, No.~~, ~~,2015}%
{Shell \MakeLowercase{\textit{et al.}}: Bare Demo of IEEEtran.cls for Journals}
\maketitle

\begin{abstract}
In this note, we analyze an iterative soft / hard thresholding algorithm with homotopy
continuation for recovering a sparse signal $x^\dag$ from noisy data of a noise level $\epsilon$.
Under suitable regularity and sparsity conditions, we design a path along which
the algorithm can find a solution $x^*$ which admits a sharp reconstruction error
$\|x^* - x^\dag\|_{\ell^\infty} = O(\epsilon)$ with an iteration complexity
$O(\frac{\ln \epsilon}{\ln \gamma} np)$, where $n$ and $p$ are problem dimensionality and $\gamma\in (0,1)$ controls
the length of the path. Numerical   examples are given
to illustrate its performance.
\end{abstract}
\begin{IEEEkeywords}
iterative soft/hard thresholding, continuation, solution path, convergence
\end{IEEEkeywords}

\IEEEpeerreviewmaketitle

\section{Introduction}\label{sec:intro}

\IEEEPARstart{S}{parse}
recovery has attracted much attention in machine learning, signal processing,
statistics and inverse problems over the last decade. Often the problem is formulated as
\begin{equation}\label{eqn:gov}
  y = \Psi x^{\dag} + \eta,
\end{equation}
where $x^{\dag} \in \mathbb{R}^{p}$ is the unknown sparse signal,
$y\in\mathbb{R}^n$ is the data with the noise $\eta \in\mathbb{R}^{n} $ of level $\epsilon=\|\eta\|$, and
the matrix $\Psi\in\mathbb{R}^{n\times p}$ with $p\gg n$ has normalized columns $\{\psi_i\}$, i.e.,
$ \|\psi_i\| =1$, $i=1,\ldots,p.$
The desired sparsity structure can be enforced by either the $\ell^0$ or $\ell^1$ penalty, i.e.,
\begin{equation}\label{reglasso}
 \min_{x \in \mathbb{R}^{p}}  \tfrac{1}{2}\|\Psi x
 -y\|^{2} + \lambda\|x\|_{t},\quad t\in \{0,1\},
\end{equation}
where $\lambda>0$ is the regularization parameter. % controlling the sparsity level of the penalized solution $x_\lambda$.

Among existing algorithms for minimizing \eqref{reglasso}, iterative soft / hard thresholding (IST/IHT)
algorithm \cite{DaubechiesDefrisedeMol:2004, pfbs,BlumensathDavies:2008,Attouch:2013} %Kingsbury:2001,
and their accelerated extension  \cite{fista,nesta} are extremely popular. These algorithms are of the form
\begin{equation}\label{equ:ith1}
x^{k+1} = T_{\tau_k\lambda} (x^k + \tau_k \Psi^t (y - \Psi x^k)),
\end{equation}
where $\tau_k$ is the stepsize, and $T_\lambda$ is a soft- or hard-thresholding operator defined componentwise by
\begin{equation}\label{eqn:thresholding}
  T_\lambda(t)=\left\{\begin{array}{ll}
    \max(|t|-\lambda,0)\mathrm{sgn}(t), & \mbox{IST},\\
    \chi_{\{|t|>\sqrt{2\lambda}\}}(t), & \mbox{IHT},
  \end{array}\right.
\end{equation}
where $\chi(t)$ is the characteristic function.
Their convergence was analyzed in many works, mostly under
the condition $\tau_k<2/\|\Psi\|^2$. This condition ensures a (asymptotically) contractive thresholding
and thus the desired convergence \cite{DaubechiesDefrisedeMol:2004,pfbs,BlumensathDavies:2008,Attouch:2013}.
Meanwhile, it was observed that the continuation along $\lambda$ can
greatly speed up the algorithms \cite{fpc,gpsr,sparsa,nesta,Lorenz:2013}.
Nonetheless, as pointed out by \cite{TroppWright:2010} ``... the design of a robust, practical, and theoretically
effective continuation algorithm remains an interesting open question ...'' There were several works aiming
at filling this gap. In the works \cite{XiaoZhang:2013,Agarwal:2012},
a proximal gradient method with continuation for  $\ell^1$ problem was analyzed with linear search,
under sparse restricted eigenvalue/restricted strong convexity condition. Recently, a Newton type
method with continuation was studied for $\ell^1$ and $\ell^0$ problems \cite{FanJiaoLu:2014,JiaoJinLu:2015}.
In this work, we present a unified approach to analyze IST/IHT
with continuation and a fixed  stepsize $\tau =1$, denoted by ISTC/IHTC. The challenge
in the analysis is the
lack of monotonicity of function values due to the choice $\tau =1$.

The overall procedure is given in Algorithm \ref{alg:ithc}. Here $\lambda_0$
is an initial guess of $\lambda$, supposedly large, $\gamma\in(0,1)$ is the decreasing
factor for $\lambda$, and $K_{max}$ is the maximum number of inner iterations (for a
fixed $\lambda$). The choice of the final $\lambda^*$ is given in \eqref{eqn:lambda} below.
Distinctly, the inner iteration
does not need to be solved exactly (actually one inner iteration suffices the desired accuracy
of the final solution $x^*$, cf. Theorem \ref{thm:consoft} below), and there is no need
to perform stepsize selection. % Hence, the algorithm is expected to have very good performance.

\begin{algorithm}[hbt!]
   \caption{Iterative Soft/Hard-Thresholding with Continuation (ISTC/IHTC)}\label{alg:ithc}
   \begin{algorithmic}[1]
     \STATE Input: $\Psi\in \mathbb{R}^{n\times p}$, $y$, $\lambda_0$, $\gamma \in (0,1)$, $\lambda^*$, $K_{max}\in \mathbb{N}$, $x(\lambda_0) = 0$.
     \FOR {$\ell=1,2,...$}
     \STATE Let $\lambda_\ell = \gamma \lambda_{\ell-1}$, $x^0 = x(\lambda_{\ell-1})$.
     \STATE If $\lambda_\ell < \lambda^*$, stop and output $x^* = x^0$.
     \FOR {$k = 0,1,...,K_{max}-1$}
     \STATE $x^{k+1} = T_{\lambda_\ell} (x^k + \Psi^t (y - \Psi x^k)).$
     \ENDFOR
     \STATE Set $x(\lambda_\ell) = x^{K_{max}}$
     \ENDFOR
   \end{algorithmic}
\end{algorithm}

In Theorem \ref{thm:consoft}, we prove that under suitable mutual coherence condition on the matrix
$\Psi$ (cf. Assumption \ref{ass:mc} and Remark \ref{rmk:mc}), ISTC/IHTC always converges.

\section{Convergence analysis}

The starting point of our analysis is the next lemma.
\begin{lemma}\label{lem:thresholding}
For any $x,y\in \mathbb{R}$, there holds
\begin{equation*}
|T_{\lambda}(x + y) - x| \leq \left\{\begin{array}{ll}
|y| + \lambda & \mbox{ IST},\\
|y| + \sqrt{2\lambda} & \mbox{ IHT}.
\end{array}\right.
\end{equation*}
\end{lemma}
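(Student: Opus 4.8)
The plan is to reduce both the soft and hard cases to a single, uniform estimate measuring how far each thresholding operator displaces a point from itself, and then finish with one application of the triangle inequality. Setting $s = x + y$, I would first record the elementary identity
\[
T_{\lambda}(x+y) - x = \bigl(T_{\lambda}(s) - s\bigr) + y,
\]
which follows because $s - x = y$. Taking absolute values gives $|T_{\lambda}(x+y) - x| \le |T_{\lambda}(s) - s| + |y|$, so the entire lemma reduces to bounding the self-displacement $|T_{\lambda}(s) - s|$ by $\lambda$ in the IST case and by $\sqrt{2\lambda}$ in the IHT case.

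For the soft-thresholding operator I would split according to whether $|s| > \lambda$ or $|s| \le \lambda$. In the first regime $T_{\lambda}(s) = (|s|-\lambda)\,\sgn(s)$, whence $T_{\lambda}(s) - s = -\lambda\,\sgn(s)$ and the displacement equals exactly $\lambda$; in the second regime $T_{\lambda}(s) = 0$, so the displacement is $|s| \le \lambda$. In both cases $|T_{\lambda}(s) - s| \le \lambda$, and combining with the triangle inequality above delivers the IST bound $|y| + \lambda$.

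For the hard-thresholding operator I would likewise split on whether $|s| > \sqrt{2\lambda}$ or $|s| \le \sqrt{2\lambda}$. In the first regime $T_{\lambda}(s) = s$, so the displacement is $0$; in the second regime $T_{\lambda}(s) = 0$, so the displacement is $|s| \le \sqrt{2\lambda}$. Hence $|T_{\lambda}(s) - s| \le \sqrt{2\lambda}$ in every case, and the same triangle inequality produces the IHT bound $|y| + \sqrt{2\lambda}$.

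I do not expect a genuine obstacle in this argument: once the quantity is recast through $s = x+y$, both estimates collapse to the fact that each thresholding map moves a point by at most its threshold magnitude ($\lambda$ or $\sqrt{2\lambda}$), with no coupling between $x$ and $y$ beyond the single triangle inequality. The only point deserving slight care is to check both regimes of each operator separately, in particular noting that hard-thresholding leaves large entries untouched (displacement $0$) while annihilating small ones (displacement at most $\sqrt{2\lambda}$); this is precisely what keeps the resulting bound clean and uniform across the two operators.
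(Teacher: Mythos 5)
Your proposal is correct and follows essentially the same route as the paper: the paper also applies the triangle inequality $|T_\lambda(x+y)-x|\leq |T_\lambda(x+y)-(x+y)|+|y|$ and then bounds the self-displacement $|T_\lambda(s)-s|$ by $\lambda$ (IST) or $\sqrt{2\lambda}$ (IHT) directly from the definition of the thresholding operators. Your only addition is spelling out the two-regime case analysis that the paper leaves implicit, which is a fine level of detail but not a different argument.
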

\begin{proof}
By the definition of the operator $T_\lambda$, cf. \eqref{eqn:thresholding},
\begin{equation*}
  \begin{aligned}
    |T_\lambda(x + y) - x|& \leq |T_\lambda (x+y) - (x+y)| + |y|\\
      & \leq \left\{\begin{array}{ll}
|y| + \lambda & \mbox{ IST},\\
|y| + \sqrt{2\lambda} & \mbox{ IHT},
\end{array}\right.
\end{aligned}
\end{equation*}
which completes the proof of the lemma.
\end{proof}

Let the true signal $x^\dag$ be $s$-sparse with a support $\mathcal{A}^\dag$, i.e., $s=|\mathcal{A}^\dag|$,
and $\mathcal{I}^\dag$ the complement of $\mathcal{A}^\dag$. Recall also that the mutual coherence (MC) $\mu$ of the matrix $\Psi$
is defined by $\mu=\max_{i\neq j}|\langle \psi_i,\psi_j\rangle|$ \cite{DonohoHuo:2001}.
\begin{assumption}\label{ass:mc}
The MC $\mu$ of $\Psi$ satisfies $\mu s< 1/2.$
\end{assumption}

The proper choice of the regularization parameter $\lambda$ is essential for
successful sparse recovery. It is well known that under Assumption \ref{ass:mc}, the choice $\lambda =
O(\epsilon)$ for the $\ell_1$ penalty and $\lambda = O(\epsilon^2)$ for the $\ell_0$
penalty ensures $\|x - x^\dag\|_{\ell^\infty} = O(\epsilon)$ \cite{DonohoEladTemlyakov:2006,JiaoJinLu:2015}.
Thus we consider the following \textit{a priori} choice
\begin{equation}\label{eqn:lambda}
   \lambda^* = \left\{\begin{array}{ll}
     C_1\epsilon,\ \mbox{with } C_1 > \frac{1}{1-2\mu s}, & \mbox{for ISTC},\\
     C_0\epsilon^2,\ \mbox{with } C_0 > \frac{1}{2(1-2\mu s)^2}, & \mbox{for IHTC}.
   \end{array}\right.
\end{equation}
In practice, one may consider \textit{a posteriori} choice rules \cite{ItoJin:2014}.
Now we can state the global convergence of Algorithm \ref{alg:ithc}.
%One distinct feature is that it does not require a line search step.
\begin{theorem}\label{thm:consoft}
Let Assumption \ref{ass:mc} hold, and $\lambda^*$ be chosen by \eqref{eqn:lambda}.
Suppose that $\lambda_0$ is large, $K_{max}\in\mathbb{N}$, and
\begin{equation*}
  \gamma\in\left\{\begin{array}{ll}
    \ [{2\mu s}/(1- 1/C_1),1), & \mbox{for ISTC},\\
    \ [(\frac{2\mu s}{1-{1}/({2C_0})^{1/2}})^2,1), & \mbox{for IHTC}.
  \end{array}\right.
\end{equation*}
Then Algorithm \ref{alg:ithc} is well-defined, and the solution $x^*$ satisfies:
\begin{itemize}
\item[(i)] $\mathrm{supp} (x^*) \subset \mathcal{A}^\dag$,
\item[(ii)] there holds the error estimate
\begin{equation*}
  \|x^*-x^\dag\|_{\ell^\infty}\leq \left\{\begin{aligned}
    ({C_1-1})\epsilon/({\mu s}), & \quad \mbox{for ISTC},\\
    (\sqrt{2C_0} -1)\epsilon/(\mu s), & \quad \mbox{for IHTC}.
  \end{aligned}\right.
\end{equation*}
\end{itemize}
Further, if $\min_{i\in\mathcal{A}^\dag} |x_i^\dag|$ is large enough, then $\mathrm{supp}(x^*)= \mathcal{A}^\dag$.
\end{theorem}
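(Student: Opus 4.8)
The plan is to collapse the vector recursion into a componentwise scalar analysis and then carry two coupled invariants—support containment $\mathrm{supp}(x^k)\subset\mathcal{A}^\dag$ and an $\ell^\infty$ error bound—along the homotopy path by induction on the outer index $\ell$. First I would expand the argument of the thresholding operator. Writing $a^k = x^k+\Psi^t(y-\Psi x^k)$, substituting $y=\Psi x^\dag+\eta$, and using $\|\psi_i\|=1$, the $i$-th component is $a_i^k = x_i^\dag + e_i^k$ with $e_i^k = \sum_{j\ne i}\langle\psi_i,\psi_j\rangle(x_j^\dag-x_j^k)+\psi_i^t\eta$. Cauchy--Schwarz gives $|\psi_i^t\eta|\le\epsilon$, and if $\mathrm{supp}(x^k)\subset\mathcal{A}^\dag$ the coherence sum has at most $s$ nonzero terms, so $|e_i^k|\le \mu s E^k+\epsilon$ with $E^k:=\|x^k-x^\dag\|_{\ell^\infty}$. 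Applying Lemma~\ref{lem:thresholding} to the splitting $a_i^k=x_i^\dag+e_i^k$ yields the contraction $E^{k+1}\le \mu s E^k+\epsilon+\lambda_\ell$ (with $\sqrt{2\lambda_\ell}$ in place of $\lambda_\ell$ for IHT). For $i\in\mathcal{I}^\dag$ one has $a_i^k=e_i^k$, so $T_{\lambda_\ell}(a_i^k)=0$—i.e. support is preserved—exactly when $|e_i^k|\le\lambda_\ell$ (resp. $\le\sqrt{2\lambda_\ell}$), which holds once $E^k\le(\lambda_\ell-\epsilon)/(\mu s)$ (resp. $\le(\sqrt{2\lambda_\ell}-\epsilon)/(\mu s)$). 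These two scalar estimates drive the entire argument.

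Next I would run the induction over the outer loop, maintaining $\mathrm{supp}(x(\lambda_\ell))\subset\mathcal{A}^\dag$ together with $E^{(\ell)}:=\|x(\lambda_\ell)-x^\dag\|_{\ell^\infty}\le\alpha\lambda_\ell$ for a constant $\alpha$ to be fixed (for IHT, $\le\alpha\sqrt{2\lambda_\ell}$). The base case is covered by $x(\lambda_0)=0$ and the hypothesis that $\lambda_0$ is large, so the invariant holds at the start. For the inductive step, entering stage $\ell$ with $E^0=E^{(\ell-1)}\le\alpha\lambda_{\ell-1}=\alpha\lambda_\ell/\gamma$, support preservation at the first inner step demands $\mu s(\alpha\lambda_\ell/\gamma)+\epsilon\le\lambda_\ell$, while closing the invariant after one step demands $\mu s(\alpha\lambda_\ell/\gamma)+\epsilon+\lambda_\ell\le\alpha\lambda_\ell$. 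Using $\lambda_\ell\ge\lambda^*=C_1\epsilon$, so that $\epsilon/\lambda_\ell\le 1/C_1$, these reduce to $\alpha\le\gamma(1-1/C_1)/(\mu s)$ and $\alpha\ge(1+1/C_1)/(1-\mu s/\gamma)$. A feasible $\alpha$ exists iff $\frac{1+1/C_1}{1-\mu s/\gamma}\le\frac{\gamma(1-1/C_1)}{\mu s}$, which simplifies precisely to $\gamma\ge\frac{2\mu s}{1-1/C_1}$, the stated lower bound. One also checks that once $E^k\le(\lambda_\ell-\epsilon)/(\mu s)$ this persists—this uses $\epsilon\le(1-2\mu s)\lambda_\ell$, itself forced by $C_1>1/(1-2\mu s)$—so any $K_{\max}\ge1$ only tightens the estimate and a single inner iteration already suffices. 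The IHT case is entirely parallel upon replacing $\lambda_\ell$ by $\sqrt{2\lambda_\ell}$ (so the scaling factor becomes $\sqrt\gamma$) and $1/C_1$ by $1/\sqrt{2C_0}$, reproducing $\gamma\ge(\frac{2\mu s}{1-1/\sqrt{2C_0}})^2$.

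Finally, the conclusions follow at termination. The outer loop stops after finitely many steps since $\lambda_\ell=\gamma^\ell\lambda_0\to0$ with $\gamma\in(0,1)$, which establishes that the algorithm is well-defined; the support invariant gives (i) directly. For (ii), the persistence property shows the final iterate obeys $E\le(\lambda_{\mathrm{final}}-\epsilon)/(\mu s)$ at the last processed parameter, and evaluating this at $\lambda_{\mathrm{final}}=\lambda^*=C_1\epsilon$ yields the sharp constant $(C_1-1)\epsilon/(\mu s)$ (respectively $(\sqrt{2C_0}-1)\epsilon/(\mu s)$ for IHT, via $\sqrt{2\lambda^*}=\sqrt{2C_0}\,\epsilon$). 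For the concluding statement, if $\min_{i\in\mathcal{A}^\dag}|x_i^\dag|$ exceeds the bound in (ii), then $|x_i^*|\ge|x_i^\dag|-\|x^*-x^\dag\|_{\ell^\infty}>0$ for every $i\in\mathcal{A}^\dag$, so $\mathcal{A}^\dag\subset\mathrm{supp}(x^*)$, which with (i) gives equality.

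I expect the main obstacle to be the stage-to-stage transition: the threshold drops by the factor $\gamma$ at each outer step, so one must guarantee that the error has contracted enough that off-support components are still annihilated by the \emph{smaller} $\lambda_\ell$. This is exactly what forces a lower bound on $\gamma$, and the delicate point is balancing the two competing inequalities on $\alpha$—the support-preservation ceiling $\alpha\le\gamma(1-1/C_1)/(\mu s)$ against the invariant-closure floor $\alpha\ge(1+1/C_1)/(1-\mu s/\gamma)$—whose compatibility is precisely where the critical value $\frac{2\mu s}{1-1/C_1}$ is won or lost.
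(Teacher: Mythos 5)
Your scalar estimates, the two coupled invariants (support containment plus an $\ell^\infty$ bound proportional to $\lambda_\ell$), and the induction along the path are exactly the paper's argument. Your repackaging of $\alpha$ as a free parameter squeezed between the support-preservation ceiling $\alpha\le\gamma(1-1/C_1)/(\mu s)$ and the closure floor $\alpha\ge(1+1/C_1)/(1-\mu s/\gamma)$ is algebraically equivalent to the paper's computation, which fixes $\alpha=(1-1/C_1)/(\mu s)$ up front and encodes the same threshold on $\gamma$ in the inequality $(1+\tfrac{1}{C_1}+\alpha\mu s)\lambda=2\lambda\le\alpha\gamma\lambda$; your derivation of the critical value $2\mu s/(1-1/C_1)$ as a feasibility condition is a clean way to see where it comes from, and the inner-loop and IHTC adaptations are fine.

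The gap is in the termination step for (ii). Algorithm \ref{alg:ithc} stops at the first $\ell$ with $\lambda_\ell<\lambda^*$ and outputs $x^*=x(\lambda_{\ell-1})$, so the last parameter actually processed is $\lambda_{\ell-1}\in[\lambda^*,\lambda^*/\gamma)$: it is in general strictly larger than $\lambda^*$, and no inner iteration is ever run at $\lambda^*$ itself. Hence ``evaluating the persistence bound at $\lambda_{\mathrm{final}}=\lambda^*$'' is not legitimate; what your invariants give directly is $\|x^*-x^\dag\|_{\ell^\infty}\le(\lambda_{\ell-1}-\epsilon)/(\mu s)$, which in the worst case ($\lambda_{\ell-1}$ close to $\lambda^*/\gamma$) is only $(C_1/\gamma-1)\epsilon/(\mu s)$ and overshoots the claimed constant by roughly a factor $1/\gamma$. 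The repair stays inside your framework but needs one more use of the constraint on $\gamma$, most simply via the ceiling choice $\alpha=\gamma(1-1/C_1)/(\mu s)$ (feasible exactly under your condition on $\gamma$), which at the last processed stage yields
\begin{equation*}
  \|x^*-x^\dag\|_{\ell^\infty}\le\alpha\lambda_{\ell-1}<\alpha\,\frac{\lambda^*}{\gamma}=\frac{(1-1/C_1)\lambda^*}{\mu s}=\frac{(C_1-1)\epsilon}{\mu s}.
\end{equation*}
This is precisely how the paper bookkeeps it: its induction hypothesis \eqref{eqn:mathindu} carries the extra factor $\gamma$, namely $\|x(\lambda_\ell)-x^\dag\|_{\ell^\infty}\le\alpha\gamma\lambda_\ell$ with $\alpha=(1-1/C_1)/(\mu s)$, so that at termination the bound collapses to $\alpha\gamma\lambda_{\ell-1}=\alpha\lambda_\ell<\alpha\lambda^*$. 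The same correction (carrying $\sqrt{\gamma}$ in the invariant) is needed in your IHTC case.
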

\begin{proof}
We only prove the assertion for ISTC, since that for IHTC is similar. The choice of $C_1$ in
\eqref{eqn:lambda} implies $C_1>1$ and
$\frac{2\mu s}{1-1/C_1}<1$, and thus the choice of $\gamma$ makes sense.

First we consider the inner loop at lines 5 - 7 of Algorithm \ref{alg:ithc} and
omit the index $\ell$ for notational simplicity. Let
$E^k =\|x^k - x^\dag\|_{\ell^\infty}$, and $\alpha = \frac{1-{1}/{C_1}}{\mu s}$.
Consider one IST iteration from $x^k$ to $x^{k+1}$.
The key step to the convergence proof is the following implication: with $\calA^k=\mathrm{supp}(x^k)$
\begin{equation}\label{eqn:key}
  \begin{aligned}
     &\mathcal{A}^k \subset\mathcal{A}^\dag \mbox{ and } E^k \leq \alpha\lambda \\
   \Rightarrow &\mathcal{A}^{k+1} \subset\mathcal{A}^\dag \mbox{ and } E^{k+1} \leq \alpha\gamma\lambda\quad \forall\lambda\geq \lambda^*.
  \end{aligned}
\end{equation}
Now we show this claim. It follows from \eqref{eqn:gov} and $\|\Psi_i\|=1$
the following componentwise expression for the update
\begin{equation*}
\begin{aligned}
x^{k+1}_i& =T_\lambda(x_i^k+\Psi_i^t(y-\Psi x^k))\\
& = T_\lambda(x_i^\dag + \Psi_i^t(\Psi_{\mathcal{A}^\dag\cup \mathcal{A}^k\backslash \{i\}} (x^\dag - x^k)_{\mathcal{A}^\dag\cup \mathcal{A}^k\backslash \{i\}} + \eta)).
\end{aligned}
\end{equation*}
By the hypothesis in \eqref{eqn:key}, $ \mathcal{A}^k \subset\mathcal{A}^\dag$, $E^k \leq \alpha\lambda$, $\lambda \geq \lambda^*$
and \eqref{eqn:lambda}, we deduce that for any $i \in\mathcal{I}^\dag$
\begin{equation*}
   \begin{aligned}
   &|x_i^\dag + \Psi_i^t(\Psi_{\mathcal{A}^\dag\cup \mathcal{A}^k\backslash \{i\}} (x^\dag - x^k)_{\mathcal{A}^\dag\cup \mathcal{A}^k\backslash \{i\}} + \eta)|\\
   \leq &|\Psi_i^t(\Psi_{\mathcal{A}^\dag} (x^\dag - x^k)_{\mathcal{A}^\dag}| + |\Psi_i^t\eta|\\
   \leq & \mu s E^k + \epsilon \leq (\tfrac{1}{C_1}+ \mu s\alpha) \lambda = \lambda,
  \end{aligned}
\end{equation*}
by the definition of $\alpha$, and the second inequality follows from \cite[Lemma 2.1]{JiaoJinLu:2015}.
Hence, $|x^{k+1}_i|  \leq |T_\lambda (\mu sE^k + \epsilon)| = 0$, which
implies directly $\mathcal{A}^{k+1} \subset\mathcal{A}^\dag$. Meanwhile,
under \eqref{eqn:key} and \eqref{eqn:lambda}, for any  $i\in\mathcal{A}^\dag$, by Lemma \ref{lem:thresholding}, we deduce
\begin{equation*}
 \begin{aligned}
 |x^{k+1}_i - x^\dag_i| & \leq \lambda + |\Psi_i^t(\Psi_{\mathcal{A}^\dag\backslash \{i\}} (x^\dag - x^k)_{\mathcal{A}^\dag\backslash \{i\}}| + |\Psi_i^t\eta|\\
  &\leq \lambda + \mu(s-1) E^k + \epsilon \leq \lambda + \mu s\alpha\lambda + \tfrac{1}{C_1}\lambda\\
  &= (1 + \tfrac{1}{C_1} + \alpha \mu s)\lambda  =  2\lambda \leq \alpha \gamma \lambda.
\end{aligned}
\end{equation*}
Thus we have $E^{k+1} \leq \alpha\gamma\lambda$, i.e., the claim \eqref{eqn:key} holds.

Next we prove the following assertion by mathematical induction:
for all $\ell$ with $\lambda_\ell\geq \lambda^*$, there holds
\begin{equation}\label{eqn:mathindu}
\mathrm{supp}\; x(\lambda_\ell) \subset \mathcal{A}^\dag, \quad \|x(\lambda_\ell) - x^\dag\|_{\ell^\infty}
\leq \alpha \gamma \lambda_\ell.
\end{equation}
Since $\lambda_0$ is large, it satisfies \eqref{eqn:mathindu}. Now assume \eqref{eqn:mathindu} holds for $\lambda_{\ell-1}$, i.e.,
$\mathrm{supp}\; x(\lambda_{\ell-1}) \subset \mathcal{A}^\dag$ and $\|x(\lambda_{\ell-1}) - x^\dag\|_{\ell^\infty}
\leq \alpha \gamma \lambda_{\ell-1}$. When Algorithm \ref{alg:ithc} runs lines 3 - 7 for $\lambda_\ell$, since $x^0 = x(\lambda_{\ell-1})$, then we have
$\mathcal{A}^0 \subset\mathcal{A}^\dag$ and $ E^0 \leq \alpha\lambda_\ell.$
From \eqref{eqn:key}, we obtain that for all $k \geq 1$,
$\mathcal{A}^k \subset\mathcal{A}^\dag \mbox{ and } E^k \leq \alpha\gamma\lambda_\ell.$
 In particular, if we choose $k = K_{max}$, then \eqref{eqn:mathindu} holds for $\lambda_\ell$.
When Algorithm \ref{alg:ithc} terminates for some $\lambda_\ell < \lambda^*$, then $\lambda_{\ell-1} \geq \lambda^*$ and $x^* = x(\lambda_{\ell-1})$. From \eqref{eqn:mathindu}
we have $\mathrm{supp}\; x^* \subset \mathcal{A}^\dag$ and $\|x^* - x^\dag\|_{\ell^\infty} \leq \alpha
\lambda^* = (C_1 -1)\epsilon/(\mu s)$. Likewise, if $\min_{i\in\calA^\dag}|x_i|>(C_1-1)\epsilon/(\mu s)$,
property (ii) implies $\mathrm{supp}(x^*)=\calA^\dag$.

Last, we briefly discuss IHTC. For the choice $C_0$ in \eqref{eqn:lambda}, $\gamma\in
[(\frac{2\mu s}{1-{1}/({2C_0})^{1/2}})^2,1)$ makes sense. With
$\alpha = \frac{1-{1}/({2C_0})^{1/2}}{\mu s}$, a similar argument yields
\begin{equation*}
  \begin{aligned}
    &\mathcal{A}^k \subset\mathcal{A}^\dag \mbox{ and } E^k \leq \alpha\sqrt{2\lambda}\\
    \Rightarrow & \mathcal{A}^{k+1} \subset\mathcal{A}^\dag \mbox{ and } E^{k+1} \leq \alpha\sqrt{2\gamma\lambda}.
  \end{aligned}
\end{equation*}
The rest follows like before, and thus it is omitted.
\end{proof}

\begin{remark}
The proof works for any choice $K_{max}\geq 1$, including $K_{max}=1$. In
practice, we fix it at $K_{max}=5$. This together with Theorem \ref{thm:consoft} allows estimating
the complexity of Algorithm \ref{alg:ithc}. At
each iteration, one needs to compute matrix-vector product $\Psi x$ and $\Psi^t y$, and
for each $\lambda$, the number of iterations is bounded by  $K_{max}$. The overall
cost depends on the decreasing factor $\gamma$ by $O(\frac{\ln \lambda^*}{\ln \gamma}np) = O(\frac{\ln \epsilon}{\ln \gamma} np)$.
\end{remark}
\begin{remark}\label{rmk:mc}
Conditions similar to Assumption \ref{ass:mc} have been widely used in the literature, for analyzing OMP
\cite{TroppGilbert:2007,CaiWang:2011,DonohoEladTemlyakov:2006} (with $(2s-1)\mu\leq 1$) and
for bounding the estimation error of Lasso \cite{Lounici:2008,Zhang:2009} (with $7s\mu<1$ and $4s\mu\leq 1$).
Thus Assumption \ref{ass:mc} is fairly standard. Examples of matrices with small MC $\mu$ include
that formed by equiangular tight frame and random subgaussian matrices \cite{FoucartRauhut:2013}.
Further, we note that other similar conditions, e.g., restricted eigenvalue condition and RIP conditions, were also used
to derive error bounds of the type $\|x - x^\dag\|_2 = O(\epsilon)$ for proximal gradient
homotopy algorithms \cite{XiaoZhang:2013,Agarwal:2012} and Greedy methods, e.g., CoSaMP
\cite{NeedellTropp:2009}, NIHT \cite{BlumensathDavis:2010} and CGIHT \cite{BlanchardTannerWei:2015}.
\end{remark}

\section{Numerical Results and Discussions}

Now we present numerical examples to show the convergence and
the performance of Algorithm \ref{alg:ithc}. First, we give
implementation details, e.g., data generation, parameter setting for the algorithm. Then our method is
compared with several state-of-the-art algorithms in terms of reconstruction error and recovery ability
via phase transition.

\subsection{Implementation details}
Following \cite{nesta}, the signals $x^\dag$ are chosen as $s$-sparse with a dynamic range
$DR := \max\{|x^\dag_{i}|:x^\dag_{i}\neq 0\}/\min\{|x^\dag_{i}|:x^\dag_{i}\neq 0\}.$
The matrix $\Psi\in\mathbb{R}^{n\times p}$ is chosen to be either random Gaussian matrix,
or random Bernoulli matrix, or  the product of a partial FFT matrix and inverse
Haar wavelet transform. Under proper conditions, such matrices satisfy Assumption
\ref{ass:mc}. The noise  $\eta$ has entries following i.i.d. $ N(0,\sigma^2)$.

We fix the  algorithm parameters as follows: $\lambda_0=\|\Psi^t y\|_{\infty}$ and  $\lambda_0=\|\Psi^t y\|_{\infty}^2/2$
for ISTC and IHTC, respectively \cite{FanJiaoLu:2014,JiaoJinLu:2015}, decreasing factor $\gamma=0.8$.
 Since the optimal $\lambda^*$ depends on the  noise level $\epsilon$, which
is often unknown in practice, we predefine a path $\Lambda = \{\lambda_\ell\}_{\ell=0}^N$ with $\lambda_\ell = \lambda_0\gamma^\ell$
and  $N=100$. Then we run Algorithm \ref{alg:ithc} on the path $\Lambda$ and select the optimal
$\lambda^*$ by Bayesian information criterion \cite{FanJiaoLu:2014}.
All the computations were performed on an eight-core desktop with 3.40 GHz and 12 GB RAM using \texttt{MATLAB} 2014a.
The \texttt{MATLAB} package \texttt{ISHTC} for reproducing all the numerical results can be found at
\url{http://www0.cs.ucl.ac.uk/staff/b.jin/companioncode.html}.

First we illustrate Theorem \ref{thm:consoft} by examining the influence of sparsity level $s$,  coherence $\mu$ and noise level $\sigma$
on IHTC recovery on three settings ($n  = 500$, $p= 1000$, $DR = 100$):
\begin{enumerate}
\item[(a)] random Gaussian $\Psi$, $\sigma =$1e-2, $ s = 10:10:100$.
\item[(b)]  random Gaussian $\Psi$, $ s = 50$, $\sigma=$1e-4,1e-3,1e-2,1e-1,1.
\item[(c)] $\Psi$ is random Gaussian with correlation, where the parameter $\nu$ controls the coherence $\mu$ (see \cite[Sect. 5.1]{JiaoJinLu:2013} for details). In general a larger parameter $\nu$ gives a larger $\mu$ (a typical example: $\mu = 0.19$ for $\nu = 0$; $\mu = 0.33$ for $\nu = 0.15$; $\mu = 0.56$ for $\nu = 0.3$ and $\mu = 0.74$ for $\nu = 0.5$). We choose $\nu = 0:0.05:1$, $s = 10$, $\sigma  =$ 1e-3.
\end{enumerate}
The results in Fig. \ref{fig:inf} are computed from 100 independent realizations.
It is observed that when the sparsity level $s$ and noise level $\sigma$ and incoherence $\nu$ are
small, IHTC recovers the  exact support
with high probability as implied by Theorem \ref{thm:consoft}.

\begin{figure}[htb!]
  \centering
  \begin{tabular}{ccc}
   \includegraphics[trim = 1cm 0cm 2cm 0.8cm, clip=true,width=.15\textwidth]{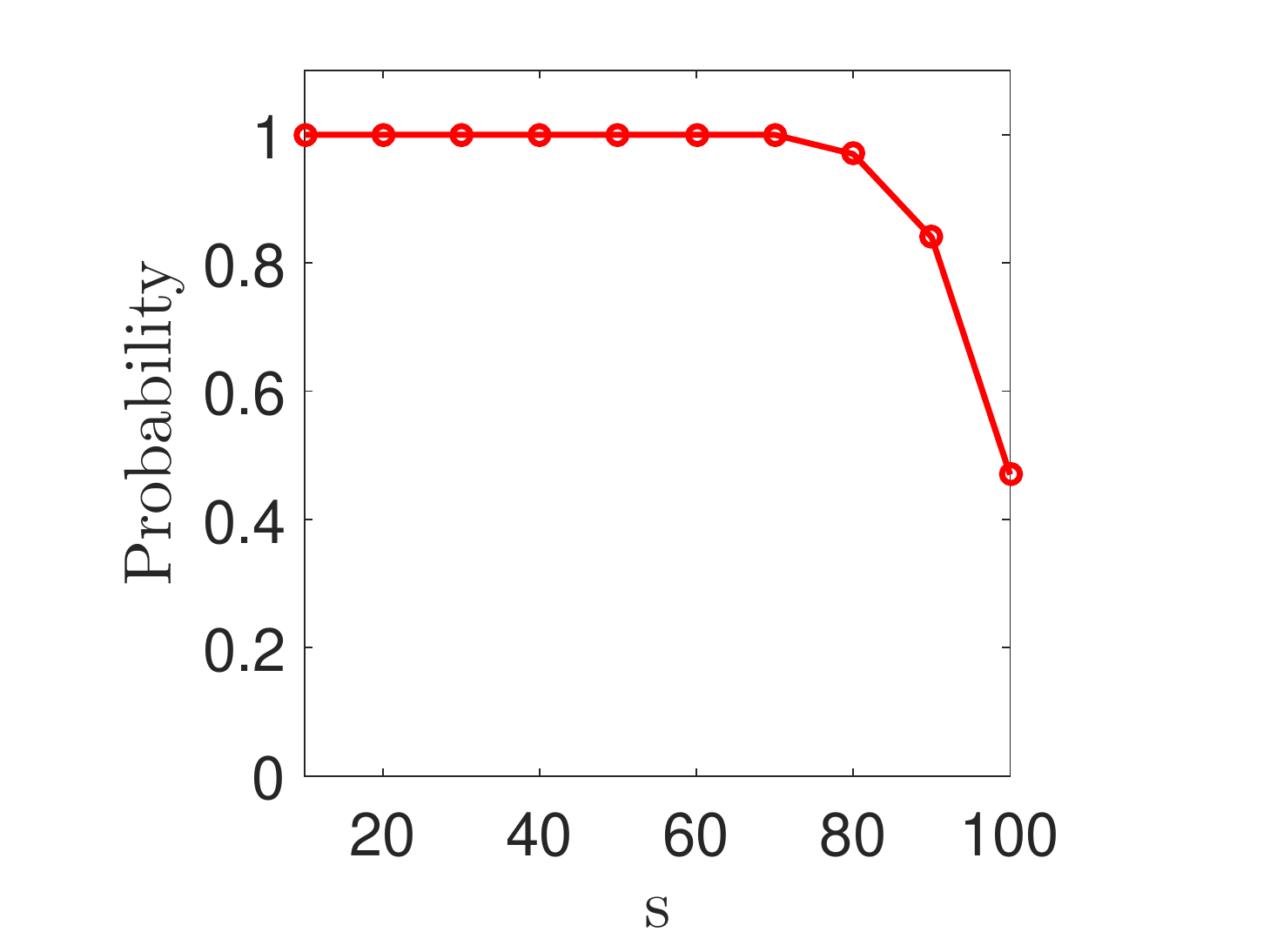} &
   \includegraphics[trim = 1cm 0cm 2cm 0.8cm, clip=true,width=.15\textwidth]{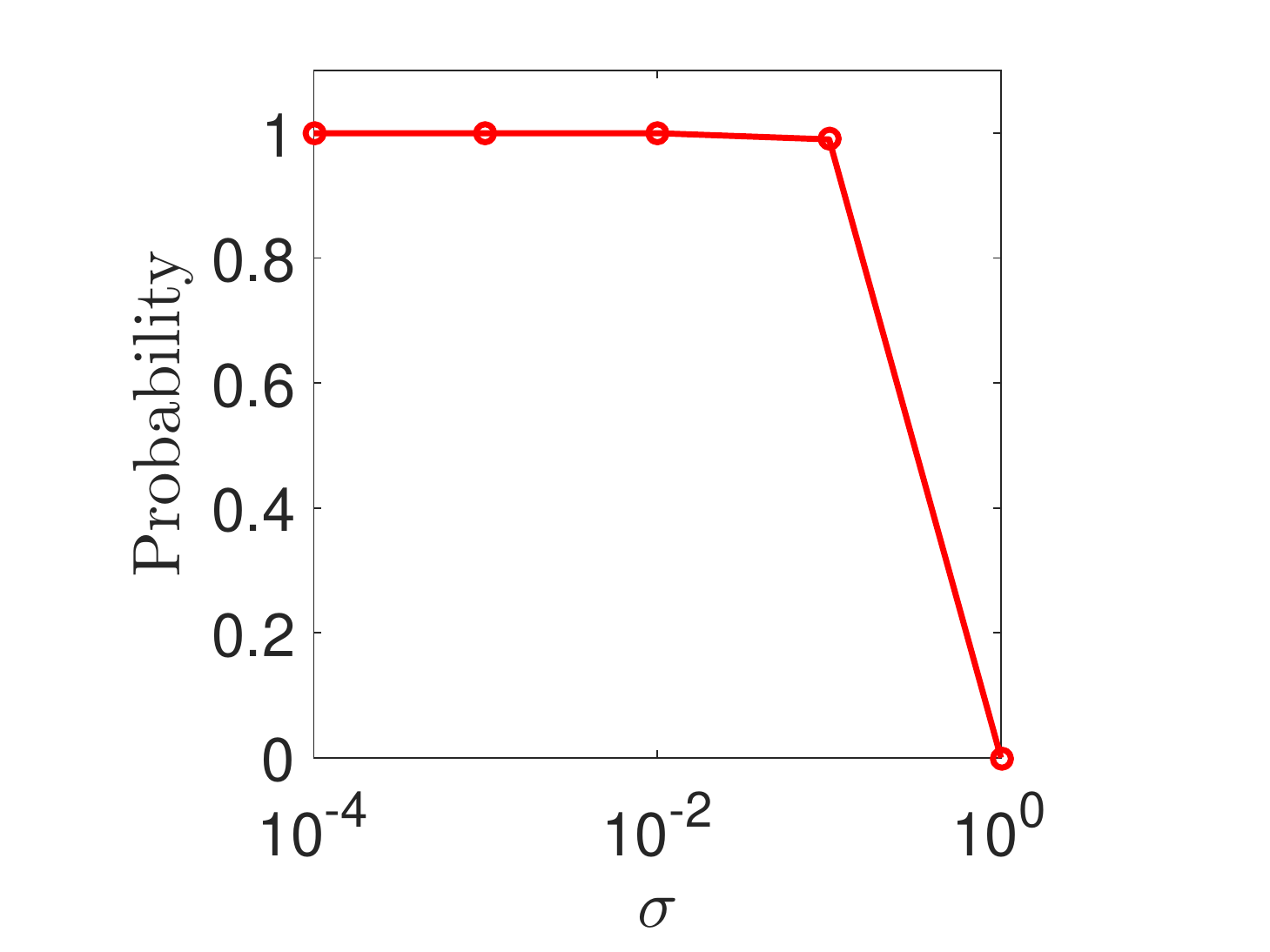} &
   \includegraphics[trim = 1cm 0cm 2cm 0.8cm, clip=true,width=.15\textwidth]{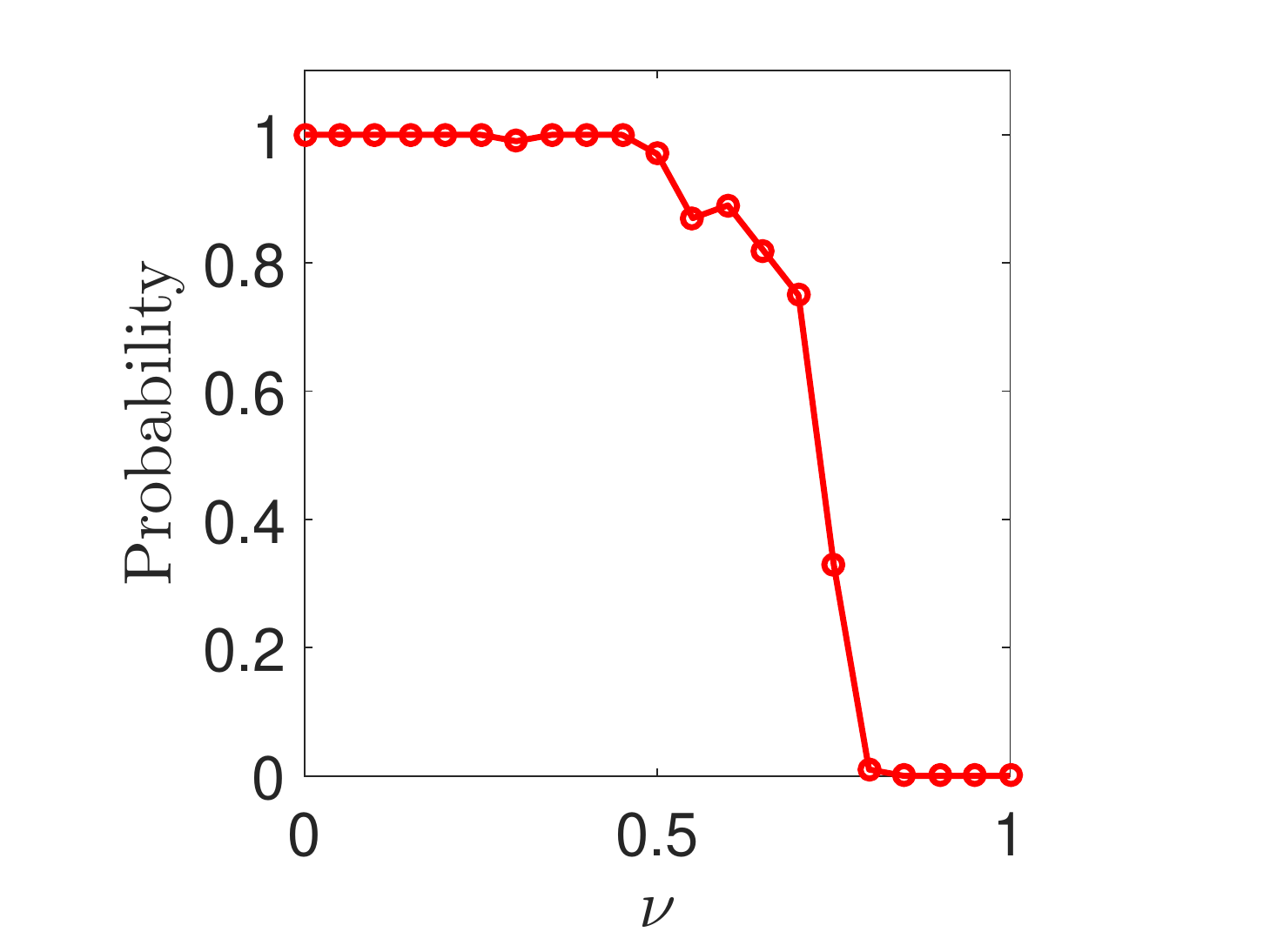}\\
   (a) $s$ & (b) $\sigma$ &   (c) $\nu$\\
  \end{tabular}
   \caption{The exact support recovery probability v.s. $s$, $\sigma$ and $\nu$}\label{fig:inf}
\end{figure}

\subsection{Comparison of ISTC with $\ell^1$ solvers}
Now we compare ISTC with four state-of-the-art $\ell^1$ solvers:
GPSR \cite{gpsr} (\url{http://www.lx.it.pt/mtf/GPSR/}),
SpaRSA \cite{sparsa} (\url{http://www.lx.it.pt/mtf/SpaRSA/}), proximal-gradient
homotopy method (PGH)\cite{XiaoZhang:2013} (\url{https://www.microsoft.com/en-us/download/details.aspx?id=52421}),
and FISTA \cite{fista} (implemented as
\url{https://web.iem.technion.ac.il/images/user-files/becka/papers/wavelet_FISTA.zip})\footnote{All the codes were last accessed
on February 23, 2017.}.

The numerical results  (CPU time, number of matrix-vector multiplications (nMV),  relative $\ell_{2}$  error (Re$\ell_{2}$), and absolute $\ell_{\infty}$
 error (Ab$\ell_{\infty}$)) are computed from 10 independent realizations of for random
 Bernoulli  sensing matrices with different parameter tuples $(n,p,s,DR,\sigma)$ are shown
in Tables \ref{tab:timeerrorb}. It is observed that ISTC
yields reconstructions that are comparable with that by other methods but
at least two to three times faster. Further, it scales well with the problem size $p$.

\begin{table}[htb!]
\centering
\caption{Numerical results (CPU time and errors), with random Bernoulli
$\Psi$, of size $p =$ 10000, 18000, $n = \lfloor p/4\rfloor$,
$s= \lfloor n/40\rfloor$, with $DR=100$ and $\sigma=\mbox{5e-2}$.}\label{tab:timeerrorb}
\begin{tabular}{ccccccp{0.3cm}p{0.3cm}c}
\hline\hline
\multicolumn{1}{c}{$p$} & \multicolumn{1}{c}{method} & \multicolumn{1}{c}{time (s)} & \multicolumn{1}{c}{nMV}& \multicolumn{1}{c}{Re$\ell^2$} & \multicolumn{1}{c}{Ab$\ell^{\infty}$} \\
 \hline
                  &ISTC              &  1.0   &58 &4.21e-3         &2.66e-1            \\
                  &PGH              &  1.7    &419 &4.14e-3         &2.66e-1      \\
    $10000$        &SpaRSA           &  3.4   &302&4.13e-3         &2.63e-1            \\
                  &GPSR             &  3.0    &256 &4.25e-3         &2.71e-1          \\
                   &FISTA            &  5.3   &505 & 4.30e-3               & 2.65e-1                 \\
% \hline
%                  &ISTC              &  2.0   &58 &4.30e-3         &2.71e-1            \\
%                  &PGH              &  3.4    &431 &4.21e-3         &2.68e-1      \\
%    $14000$        &SpaRSA           &  6.8   &306 &4.21e-3         &2.67e-1            \\
%                  &GPSR             &  5.7    &258 &4.32e-3         &2.75e-1          \\
%                   &FISTA           &  10.1   &493 & 4.60e-3               & 2.76e-1                 \\
  \hline
                  &ISTC              &  3.3   &58&4.34e-3         &2.88e-1            \\
                  &PGH              &  5.6    &443 &4.25e-3         &2.85e-1      \\
    $18000$       &SpaRSA           &  11.4   &309 &4.25e-3         &2.84e-1            \\
                  &GPSR             &  9.5    &258 &4.36e-3         &2.91e-1          \\
                  &FISTA             &  17.2  &506 & 4.40e-3               & 2.74e-1                 \\
\hline\hline
\end{tabular}
\end{table}

Next, we compare the empirical performance of ISTC with other methods
by their phase transition curves in the  $\rho$-$\delta$ plane,
with $\rho = s/n$ and $\delta = n/p$. When computing the curves, we
fix the dimension $p = 1000$, and partition the range $(\delta,\rho)
\times[0.1,1]^2$ into a $30\times 30$ equally spaced grid, and run
100 independent simulations at each grid point.
The $s$-sparse signal $x^{\dag}\in\mathbb{R}^p$, matrix
$\Psi\in\mathbb{R}^{n\times p}$, and data $y\in\mathbb{R}^n$ are generated as
\cite[Fig. 13]{DonohoTsaig:2008}. Fig. \ref{fig:phasesoft} plots the logistic regression curves
identifying the $90\%$ success rate for the algorithms.
IHTC exhibits similar phase transition behavior as
other methods.
%which are all comparable with each other.

\begin{figure}[h]
\centering
\includegraphics[trim = 0cm 0cm 0cm 0.5cm, clip=true,width=5cm]{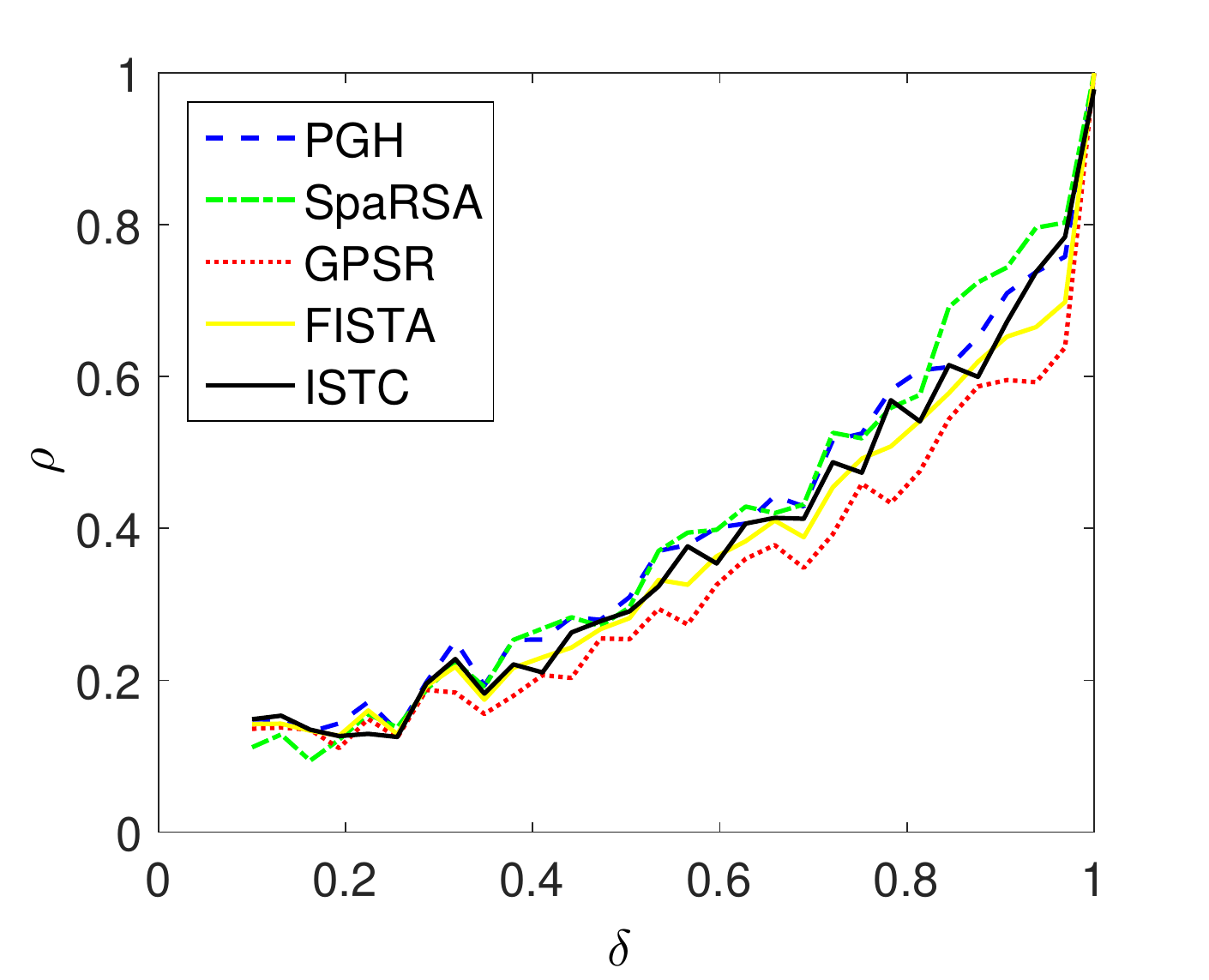}
\caption{The empirical phase transition curves for ISTC, PGH, SpaRSA and GPSR, with $\rho = s/n$ and $\delta = n/p$.}\label{fig:phasesoft}
\end{figure}

\subsection{Comparison of IHTC with greedy solvers}
Now we compare IHTC with four state-of-the-art greedy methods for the $\ell^0$ problem, to
recover 1D signal and benchmark MRI image. These methods include
OMP \cite{TroppGilbert:2007} (\url{https://sparselab.stanford.edu/SparseLab_files/Download_files/SparseLab21-Core.zip}),
{normalized  IHT (NIHT) \cite{BlumensathDavis:2010}} (\url{http://www.gaga4cs.org/}), CoSaMP \cite{NeedellTropp:2009} (\url{http://mdav.ece.gatech.edu/software/SSCoSaMP-1.0.zip}), and conjugate gradient IHT (CGIHT) \cite{BlanchardTannerWei:2015} (\url{http://www.gaga4cs.org/}).

The underlying 1D signal and 2D MRI image  are compressible under
a  wavelet basis. Thus, the data can be chosen as the wavelet coefficients
sampled by the product of a partial FFT matrix and inverse Haar wavelet transform.
For the 1D signal, the matrix $\Psi$ is of size $665\times 1024$, and consists
of applying a partial FFT and an inverse two level Harr wavelet transform.
The signal under wavelet transform has $247$ nonzeros, and $\sigma=\mbox{1e-4}$. The results are shown in
Fig. \ref{fig:1} and Table \ref{tab:3}. The reconstruction by IHTC
is  visually more appealing than that of the others, cf. Fig. \ref{fig:1}.  The results by AIHT
and CoSaMP suffer from pronounced oscillations. This is further confirmed by the PSNR value defined by
$\mathrm{PSNR}=10\cdot \log\frac{V^2}{\rm MSE}$,
where $V$ is the maximum absolute value of the true signal, and MSE is the mean
squared error of the reconstruction. Table \ref{tab:3} also presents the CPU time
of the 1D example, which shows clearly that IHTC is the fastest one.

For the 2D MRI image, the matrix $\Psi$ amounts to a partial FFT and an inverse
wavelet transform, and it has a size $34489\times 262144$. The image under eight level Haar wavelet transformation  has
$7926$ nonzero entries and $\sigma=\mbox{3e-2}$. The numerical results are
shown in Fig. \ref{fig:2} and Table \ref{tab:4}.  All  $\ell^0$ methods produce comparable
results, but the IHTC is fastest.

\begin{figure}[h]
\centering
\includegraphics[trim = 1cm 2cm 1cm 0cm, clip=true,width=8cm]{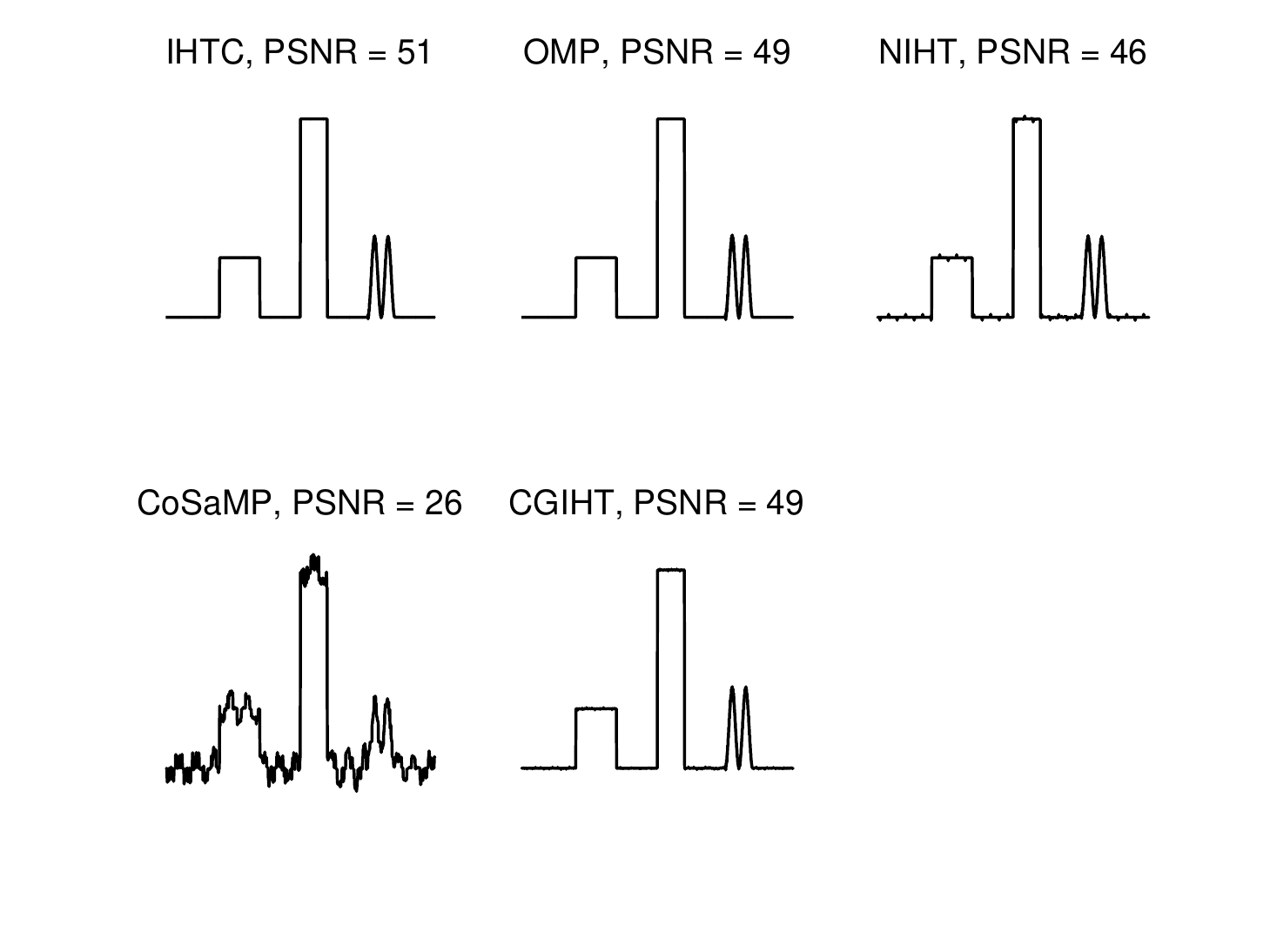}
\caption{Reconstructed signals and their PSNR values  }\label{fig:1}
\end{figure}

\begin{figure}[hbt!]
\centering
\includegraphics[trim = 1cm 1cm 1cm 0cm, clip=true,width=8cm]{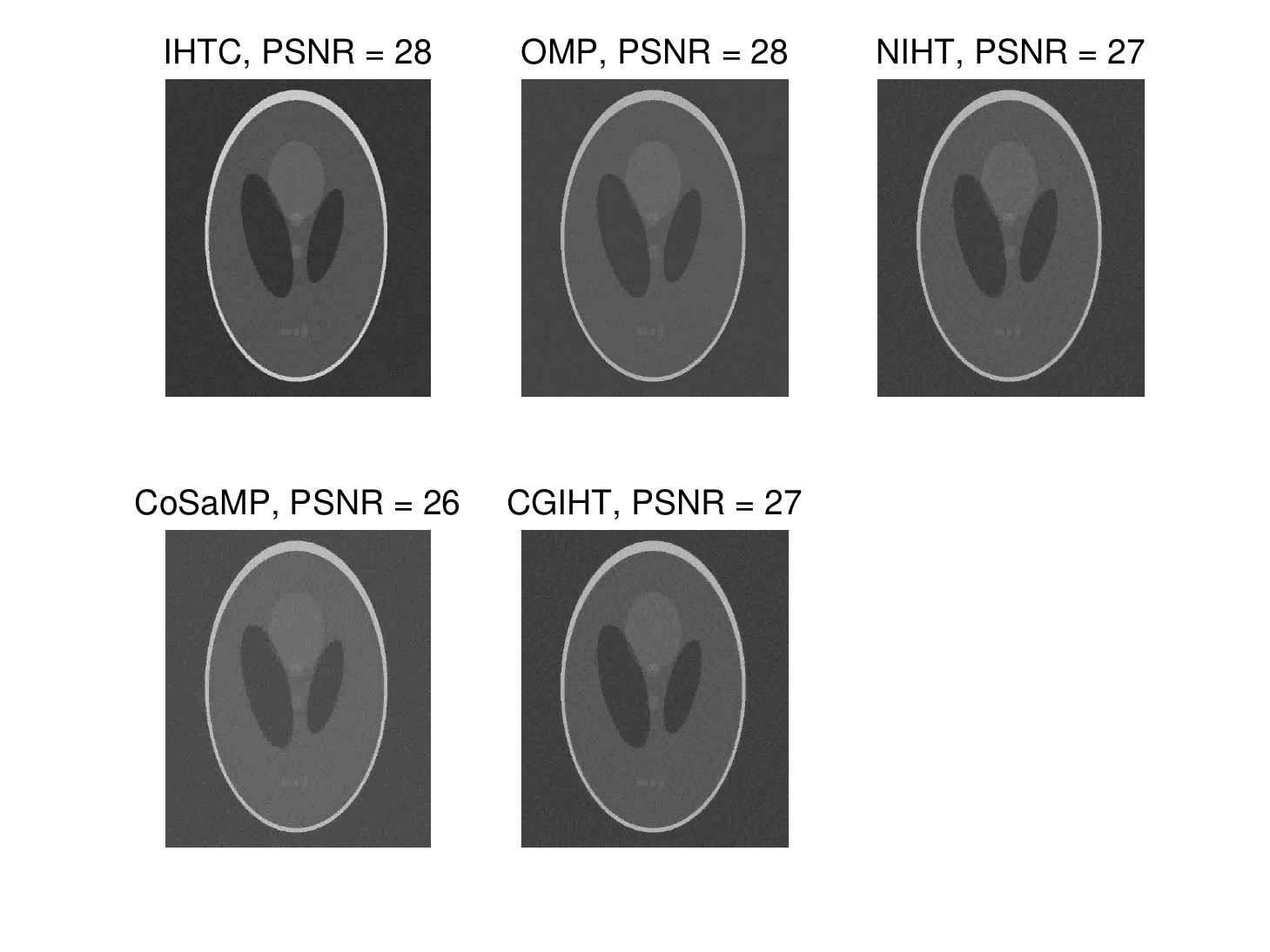}
\caption{Reconstructed MRI images and their PSNR values  }\label{fig:2}
\end{figure}

\begin{table}
{Left: 1D signal with $n=665$, $p=1024$, $s=247$, and $\sigma$=1e-4. Right:
2D image with $n=34489$, $p=262144$, $s=7926$, and $\sigma$=3e-2. }
\parbox{.22\textwidth}{
\centering
\caption{1D signal\label{tab:3}}
 \begin{tabular}{ccccc}
 \hline
    method   &CPU time  &PSNR      \\
 \hline
  IHTC      & 0.41  &51     \\
  OMP  & 1.20  &49     \\
  NIHT & 0.96  &46      \\
  CoSaMP       & 0.49  &26    \\
  CGIHT & 0.98 &49\\
  \hline
  \end{tabular}
}\quad\quad
\parbox{.22\textwidth}{
\centering
\caption{2D image\label{tab:4}}
 \begin{tabular}{ccccc}
 \hline
    method   &CPU time  &PSNR      \\
 \hline
  IHTC      & 6.1  &28     \\
  OMP  & 932  &28     \\
   NIHT    & 9.4  &27       \\
  CoSaMP      & 14.3  &26    \\
  CGIHT  &7.9 & 27\\
  \hline
  \end{tabular}
}
\end{table}

Next, we compare the empirical sparse recovery performance of IHTC with these greedy methods
by means of phase transition curves in the $\rho$-$\delta$ plane, with $\rho = s/n$ and $\delta = n/p$.
When computing the curves, we fix the dimension $p = 1000$, partition the range $(\delta,\rho)\in
[0.1,1]^2$ into a $90\times 90$ uniform grid, and run 100 independent simulations
at each grid point. Like before, the $s$-sparse signal $x^{\dag}\in\mathbb{R}^p$, matrix
$\Psi\in\mathbb{R}^{n\times p}$ and data $y\in\mathbb{R}^n$ are generated as
\cite[Fig. 13]{DonohoTsaig:2008}.  Fig. \ref{fig:phasehard} plots the logistic regression curves
identifying the $90\%$ success rate for the algorithms. IHTC exhibits
comparable phase transition phenomenon with other greedy methods, whereas CoSaMP performs slightly worse than others.

\begin{figure}[h]
\centering
\includegraphics[trim = 0cm 0cm 0cm .5cm, clip=true,width=5cm]{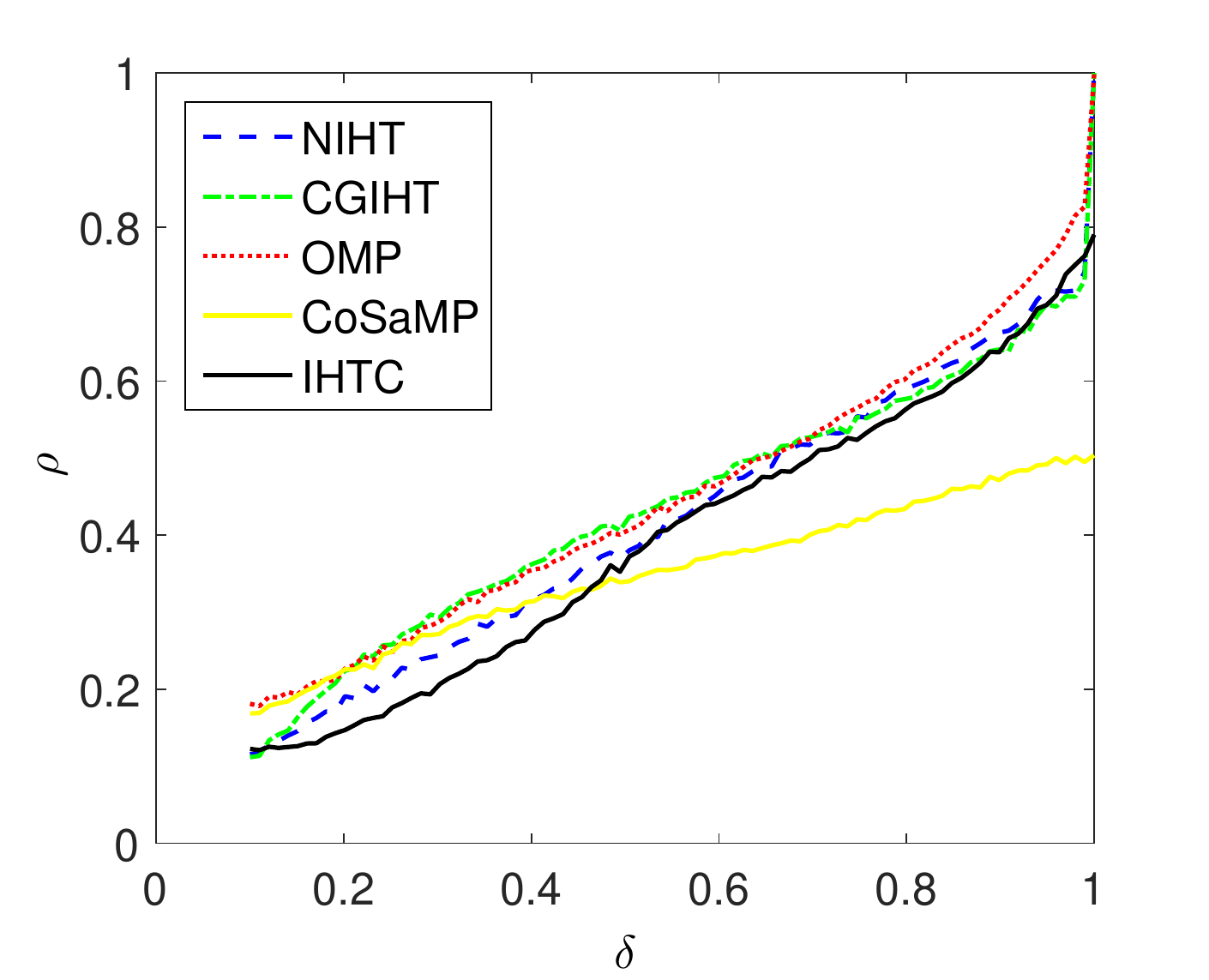}
\caption{The empirical phase transition curves of IHTC, OMP, CoSaMP, NIHT and CGIHT, with $\rho = s/n$ and $\delta = n/p$.}\label{fig:phasehard}
\end{figure}

\section{Conclusion}
In this paper, we analyze an iterative soft / hard thresholding algorithm with homotopy
continuation for sparse recovery from noisy  data.
Under standard regularity condition and sparsity assumptions,   sharp reconstruction errors
can be obtained  with an iteration complexity
$O(\frac{\ln \epsilon}{\ln \gamma} np)$. Numerical results indicated its competitiveness
with state-of-the-art sparse recovery algorithms. The results can be extended to other penalties, e.g.,
MCP \cite{Zhang:2010} or SCAD \cite{FanLi:2001}.

\section*{Acknowledgements}
The authors thank anonymous referees for their
helpful comments.
The research of Y. Jiao is partially supported by
National Science Foundation of  China (NSFC) No. 11501579 and National Science Foundation of  Hubei Province No. 2016CFB486, B. Jin by
EPSRC grant EP/M025160/1, and X. Lu
by NSFC Nos. 11471253 and 91630313.

\bibliographystyle{IEEEtran}
\bibliography{ishtc}

% Generated by IEEEtran.bst, version: 1.13 (2008/09/30)
\begin{thebibliography}{10}
\providecommand{\url}[1]{#1}
\csname url@samestyle\endcsname
\providecommand{\newblock}{\relax}
\providecommand{\bibinfo}[2]{#2}
\providecommand{\BIBentrySTDinterwordspacing}{\spaceskip=0pt\relax}
\providecommand{\BIBentryALTinterwordstretchfactor}{4}
\providecommand{\BIBentryALTinterwordspacing}{\spaceskip=\fontdimen2\font plus
\BIBentryALTinterwordstretchfactor\fontdimen3\font minus
  \fontdimen4\font\relax}
\providecommand{\BIBforeignlanguage}[2]{{%
\expandafter\ifx\csname l@#1\endcsname\relax
\typeout{** WARNING: IEEEtran.bst: No hyphenation pattern has been}%
\typeout{** loaded for the language `#1'. Using the pattern for}%
\typeout{** the default language instead.}%
\else
\language=\csname l@#1\endcsname
\fi
#2}}
\providecommand{\BIBdecl}{\relax}
\BIBdecl

\bibitem{DaubechiesDefrisedeMol:2004}
I.~Daubechies, M.~Defrise, and C.~De~Mol, ``An iterative thresholding algorithm
  for linear inverse problems with a sparsity constraint,'' \emph{Comm. Pure
  Appl. Math.}, vol.~57, no.~11, pp. 1413--1457, 2004.

\bibitem{pfbs}
P.~Combettes and V.~Wajs, ``Signal recovery by proximal forward-backward
  splitting,'' \emph{Multiscale Model. Simul.}, vol.~4, no.~4, pp. 1168--1200,
  2005.

\bibitem{BlumensathDavies:2008}
T.~Blumensath and M.~E. Davies, ``Iterative thresholding for sparse
  approximations,'' \emph{J. Fourier Anal. Appl.}, vol.~14, no. 5-6, pp.
  629--654, 2008.

\bibitem{Attouch:2013}
H.~Attouch, J.~Bolte, and B.~F. Svaiter, ``Convergence of descent methods for
  semi-algebraic and tame problems: proximal algorithms, forward--backward
  splitting, and regularized {Gauss--Seidel} methods,'' \emph{Math. Program.},
  vol. 137, no. 1-2, pp. 91--129, 2013.

\bibitem{fista}
A.~Beck and M.~Teboulle, ``A fast iterative shrinkage-thresholding algorithm
  for linear inverse problems,'' \emph{SIAM J. Imag. Sci.}, vol.~2, no.~1, pp.
  183--202, 2009.

\bibitem{nesta}
S.~Becker, J.~Bobin, and E.~Cand\'{e}s, ``{NESTA}: a fast and accurate
  first-order method for sparse recovery,'' \emph{SIAM J. Imag. Sci.}, vol.~4,
  no.~1, pp. 1--39, 2011.

\bibitem{fpc}
E.~Hale, W.~Yin, and Y.~Zhang, ``Fixed-point continuation for
  $\ell_1$-minimization: Methodology and convergence,'' \emph{SIAM J. Optim.},
  vol.~19, no.~3, pp. 1107--1130, 2008.

\bibitem{gpsr}
M.~Figueiredo, R.~Nowak, and S.~Wright, ``Gradient projection for sparse
  reconstruction: Application to compressed sensing and other inverse
  problems,'' \emph{IEEE J. Sel. Topics Signal Proc.}, vol.~1, no.~4, pp.
  586--597, 2007.

\bibitem{sparsa}
S.~Wright, R.~Nowak, and M.~Figueiredo, ``Sparse reconstruction by separable
  approximation,'' \emph{IEEE Trans. Signal Proc.}, vol.~57, no.~7, pp.
  2479--2493, 2009.

\bibitem{Lorenz:2013}
D.~A. Lorenz, ``Constructing test instances for basis pursuit denoising,''
  \emph{IEEE Trans. Signal Proc.}, vol.~5, no.~61, pp. 1210--1214, 2013.

\bibitem{TroppWright:2010}
J.~Tropp and S.~Wright, ``Computational methods for sparse solution of linear
  inverse problems,'' \emph{Proc. IEEE}, vol.~98, no.~6, pp. 948--958, 2010.

\bibitem{XiaoZhang:2013}
L.~Xiao and T.~Zhang, ``A proximal-gradient homotopy method for the sparse
  least-squares problem,'' \emph{SIAM J. Optim.}, vol.~23, no.~2, pp.
  1062--1091, 2013.

\bibitem{Agarwal:2012}
A.~Agawal, S.~Negahban, and M.~J. Wainwright, ``Fast global convergence of
  gradient methods for high-dimensional statistical recovery,'' \emph{Ann.
  Stat.}, vol.~40, no.~5, pp. 2452--2482, 2012.

\bibitem{FanJiaoLu:2014}
Q.~Fan, Y.~Jiao, and X.~Lu, ``A primal dual active set algorithm with
  continuation for compressed sensing,'' \emph{IEEE Trans. Signal Proc.},
  vol.~62, no.~23, pp. 6276--6285, 2014.

\bibitem{JiaoJinLu:2015}
Y.~Jiao, B.~Jin, and X.~Lu, ``A primal dual active set with continuation
  algorithm for the {$\ell^0$}-regularized optimization problem,'' \emph{Appl.
  Comput. Harmon. Anal.}, vol.~39, no.~3, pp. 400--426, 2015.

\bibitem{DonohoHuo:2001}
D.~L. Donoho and X.~Huo, ``Uncertainty principles and ideal atomic
  decomposition,'' \emph{IEEE Trans. Inf. Theory}, vol.~47, no.~7, pp.
  2845--2862, 2001.

\bibitem{DonohoEladTemlyakov:2006}
D.~L. Donoho, M.~Elad, and V.~N. Temlyakov, ``Stable recovery of sparse
  overcomplete representations in the presence of noise,'' \emph{IEEE Trans.
  Inf. Theory}, vol.~52, no.~1, pp. 6--18, 2006.

\bibitem{ItoJin:2014}
K.~Ito and B.~Jin, \emph{Inverse {P}roblems: {T}ikhonov {T}heory and
  {A}lgorithms}.\hskip 1em plus 0.5em minus 0.4em\relax World Scientific
  Publishing Co. Pte. Ltd., Hackensack, NJ, 2015.

\bibitem{TroppGilbert:2007}
J.~A. Tropp and A.~C. Gilbert, ``Signal recovery from random measurements via
  orthogonal matching pursuit,'' \emph{IEEE Trans. Inf. Theory}, vol.~53,
  no.~12, pp. 4655--4666, 2007.

\bibitem{CaiWang:2011}
T.~T. Cai and L.~Wang, ``Orthogonal matching pursuit for sparse signal recovery
  with noise,'' \emph{IEEE Trans. Inf. Theory}, vol.~57, no.~7, pp. 4680--4688,
  2011.

\bibitem{Lounici:2008}
K.~Lounici, ``Sup-norm convergence rate and sign concentration property of
  {L}asso and {D}antzig estimators,'' \emph{Electron. J. Stat.}, vol.~2, pp.
  90--102, 2008.

\bibitem{Zhang:2009}
T.~Zhang, ``Some sharp performance bounds for least squares regression with l1
  regularization,'' \emph{Ann. Stat.}, vol.~37, no.~5A, pp. 2109--2144, 2009.

\bibitem{FoucartRauhut:2013}
S.~Foucart and H.~Rauhut, \emph{A {M}athematical {I}ntroduction to
  {C}ompressive {S}ensing}.\hskip 1em plus 0.5em minus 0.4em\relax
  Birkh{\"a}user, Basel, 2013.

\bibitem{NeedellTropp:2009}
D.~Needell and J.~A. Tropp, ``Co{S}a{MP}: iterative signal recovery from
  incomplete and inaccurate samples,'' \emph{Appl. Comput. Harmon. Anal.},
  vol.~26, no.~3, pp. 301--321, 2009.

\bibitem{BlumensathDavis:2010}
T.~Blumensath and M.~E. Davies, ``Normalized iterative hard thresholding:
  Guaranteed stability and performance,'' \emph{IEEE J. Sel. Topics Signal
  Proc.}, vol.~4, no.~2, pp. 298--309, 2010.

\bibitem{BlanchardTannerWei:2015}
J.~D. Blanchard, J.~Tanner, and K.~Wei, ``C{GIHT}: conjugate gradient iterative
  hard thresholding for compressed sensing and matrix completion,'' \emph{Inf.
  Inference}, vol.~4, no.~4, pp. 289--327, 2015.

\bibitem{JiaoJinLu:2013}
Y.~Jiao, B.~Jin, and X.~Lu, ``A primal dual active set algorithm for a class of
  nonconvex sparsity optimization,'' preprint, arXiv:1310.1147, 2013.

\bibitem{DonohoTsaig:2008}
D.~L. Donoho and Y.~Tsaig, ``Fast solution of $\ell_1$-norm minimization
  problems when the solution may be sparse,'' \emph{IEEE Trans. Inf. Theory},
  vol.~54, no.~11, pp. 4789--4812, 2008.

\bibitem{Zhang:2010}
C.-H. Zhang, ``Nearly unbiased variable selection under minimax concave
  penalty,'' \emph{Ann. Stat.}, vol.~38, no.~2, pp. 894--942, 2010.

\bibitem{FanLi:2001}
J.~Fan and R.~Li, ``Variable selection via nonconcave penalized likelihood and
  its oracle properties,'' \emph{J. Amer. Stat. Assoc.}, vol.~96, no. 456, pp.
  1348--1360, 2001.

\end{thebibliography}
\end{document}